\documentclass[12pt]{amsart}

\usepackage{amscd,amssymb}
\usepackage[all]{xy}
\usepackage[plainpages,backref,urlcolor=blue]{hyperref}
\usepackage[utf8]{inputenc}
\usepackage{mathtools}

\theoremstyle{plain}
\newtheorem{thm}[subsection]{Theorem}

\newtheorem{prop}[subsection]{Proposition}
\newtheorem{cor}[subsection]{Corollary}
\usepackage{xcolor}
\theoremstyle{definition}
\newtheorem{rk}[subsection]{Remark}
\newtheorem{definition}[subsection]{Definition}
\newtheorem{ex}[subsection]{Example}

\numberwithin{equation}{section}
\newcommand{\OO}{\mathcal O}

\newcommand{\A}{\mathcal A}
\newcommand{\B}{\mathcal B}

\newcommand{\CC}{\mathcal C}

\newcommand{\V}{\mathcal V}

\newcommand{\al}{\alpha}

\newcommand{\HH}{\mathcal H}

\newcommand{\Z}{\mathbb Z}
\newcommand{\Q}{\mathbb Q}
\newcommand{\R}{\mathbb R}
\newcommand{\C}{\mathbb C}
\newcommand{\PP}{\mathbb P}
\newcommand{\N}{\mathbb N}

\begin{document}

\title[On the Jacobian algebras of Ziegler pairs of plane arrangements]{On the Jacobian algebras of Ziegler pairs of plane arrangements}

\author[Alexandru Dimca]{Alexandru Dimca$^1$}
\address{Universit\'e C\^ote d'Azur, CNRS, LJAD, France and Simion Stoilow Institute of Mathematics,
P.O. Box 1-764, RO-014700 Bucharest, Romania}
\email{Alexandru.Dimca@univ-cotedazur.fr}

\author[Piotr Pokora]{Piotr Pokora$^{2}$}
\address{Department of Mathematics,
University of the National Education Commission Krakow,
Podchor\c a\.zych 2,
PL-30-084 Krak\'ow, Poland.}
\email{piotrpkr@gmail.com, piotr.pokora@uken.krakow.pl}

\thanks{$^1$ Partial support from the project ``Singularities and Applications'' - CF 132/31.07.2023 funded by the European Union - NextGenerationEU - through Romania's National Recovery and Resilience Plan. $^2$ Supported by the National Science Centre (Poland) Sonata Bis Grant \textbf{2023/50/E/ST1/00025}. For the purpose of Open Access, the author has applied a CC-BY public copyright license to any Author Accepted Manuscript (AAM) version arising from this submission.}



\begin{abstract}
We consider a Ziegler pair of plane arrangements, that is two plane arrangements $\A:f=0$ and $\A':f'=0$ in the projective space $\PP^3$, such that the intersection lattices $L(\A)$ and $L(\A')$ are isomorphic, but the Betti numbers of the minimal resolutions of their Jacobian algebras are not the same. We introduce several properties for such pairs and relate them to cones over Ziegler pairs of line arrangements in $\PP^2$.

\end{abstract}

\maketitle

\section{Introduction}

Let $S=\C[x,y,z,w]$ be the polynomial ring in four variables $x,y,z,w$ with complex coefficients, and let $X:f=0$ be a reduced surface of degree $d\geq 3$ in the complex projective space $\PP^3$. 
We denote by $J_f$ the Jacobian ideal of $f$, i.e. the homogeneous ideal in $S$ spanned by the partial derivatives $f_x,f_y,f_z,f_w$ of $f$, and  by $M(f)=S/J_f$ the corresponding graded quotient ring, called the Jacobian (or Milnor) algebra of $f$.
Consider the general form of the minimal resolution of the Milnor algebra $M(f)$ of a reduced surface $X:f=0$, namely
\begin{equation}
\label{res2A}
0 \longrightarrow \bigoplus_{k=1}^{r} S(1-d-b_k)
 \longrightarrow \bigoplus_{j=1}^{q} S(1-d-c_j)
   \longrightarrow \bigoplus_{i=1}^{p} S(1-d-d_i)
   \longrightarrow S^4(1-d)
   \longrightarrow S,
\end{equation}
where $p \geq 3$, $q \geq 0$ and $r \geq 0$. Equivalently, the minimal resolution of the graded $S$-module $D_0(f)$ of derivations killing the polynomial $f$, that is
$$D_0(f)= \{\delta =\al^x\partial_x + \al^y \partial_y+ \al^z \partial_z+\al^w \partial_w \ | \ \delta(f)=0\}$$
with $\al^x, \al^y, \al^z, \al^w \in S$ is given by
\begin{equation}
\label{resD0}
0 \longrightarrow \bigoplus_{k=1}^{r} S(-b_k)
 \longrightarrow \bigoplus_{j=1}^{q} S(-c_j)
   \longrightarrow \bigoplus_{i=1}^{p} S(-d_i)
   \longrightarrow D_0(f).
   \end{equation}
We call the increasing sequences of degrees 
$${\bf d}=(d_1, \ldots, d_p), \  
{\bf c}=(c_1, \ldots, c_q) 
 \text{ and } {\bf b}=(b_1, \ldots, b_r)$$ 
  the {\it graded Betti numbers of the Jacobian algebra} $M(f)$, since they determine and are determined by the usual
graded Betti numbers of the Jacobian algebra $M(f)$ as defined for instance in \cite{Eis}. 

It is known that there is a unique polynomial $P(M(f))(u) \in \Q[u]$, called the {\it Hilbert polynomial} of $M(f)$, and an integer $k_0\in \N$ such that
\begin{equation}
\label{Hpoly}
 H(M(f))(k)= P(M(f))(k)
\end{equation}
for all $k \geq k_0$. We denote by $\Sigma$ the singular subscheme of $X$, which is defined by the Jacobian ideal $J(f)$. The general theory of Hilbert polynomials says that the degree of  $P(M(f))$ is given by the dimension of the support of  $\OO_{\Sigma}$, the coherent sheaf associated to the graded $S$-module $ M(f)$. Hence the assumption $\dim \Sigma=0$ implies that the polynomial
$P(M(f))$ is a constant, namely the total Tjurina number of $X$, given by
\begin{equation}
\label{ab0}
P(M(f))=\tau(X)=\sum_{s \in \Sigma} \tau(X,s),
\end{equation}
where $\tau(X,s)$ denotes the Tjurina number of the isolated singularity $(X,s)$, and
$\dim \Sigma=1$ implies that 
\begin{equation}
\label{ab1}
P(M(f))(u)=au+b,
\end{equation}
where $a=\deg(\Sigma)$, the degree of the subscheme $\Sigma$. 
There is a lot of interest in understanding the relations between these numerical invariants coming from the resolution of $M(f)$ and the combinatorics of $X$, when $X$ is a plane arrangement in $\PP^3$.
In fact, we introduce in this paper the following properties, which may be stated for any pair of hyperplane arrangements.
\begin{definition}
\label{defZP}
Two hyperplane arrangements $\A:f=0$ and $\A':f'=0$ in $\PP^n$ are said to form a Ziegler pair if they have isomorphic intersection lattices $L(\A)$ and $L(\A')$, obtained from the corresponding central arrangements in $\C^{n+1}$, and distinct sets of graded Betti numbers coming from the minimal resolutions of their Jacobian algebras $M(f)$ and $M(f')$. 

A Ziegler pair  $\A:f=0$ and $\A':f'=0$ may have one or several of the following additional properties.
\begin{enumerate}
\item We say that the Ziegler pair  $\A:f=0$ and $\A':f'=0$  satisfies the condition ${\rm (HP)}$ if the Jacobian algebras $M(f)$ and $M(f')$ have the same Hilbert polynomial,  that is 
$$\dim M(f)_k =\dim M(f')_k \text{ for any integer } k \geq k_0$$
for some $k_0 \in \Z$.
\item We say that the Ziegler pair  $\A:f=0$ and $\A':f'=0$  satisfies the condition ${\rm (HF)}$ if the Jacobian algebras $M(f)$ and $M(f')$ have the same Hilbert function, that is 
$$\dim M(f)_k =\dim M(f')_k \text{ for any integer } k.$$
\item We say that the Ziegler pair $\A:f=0$ and $\A':f'=0$ satisfies condition ${\rm (MDR)}$ if the Jacobian algebras $M(f)$ and $M(f')$ have the same minimal degree of a first Jacobian syzygy. More precisely, using the notation from \eqref{resD0}, we set
\[{\rm mdr}(f)=d_1=\min_i \{d_i\}.\]
Then the condition ${\rm (MDR)}$ is equivalent to
\[ {\rm mdr}(f) = {\rm mdr}(f'). \]
\item  We say that the Ziegler pair $\A:f=0$ and $\A':f'=0$ satisfies condition $({\rm SPEC}_0)$ if the arrangement $\A$ is a specialization of the arrangement $\A'$. More precisely, this means that there exists an interval $I=[0,t_0]\subset \R$ and a smooth family of homogeneous polynomials $f_t$, for $t\in I$, such that
\[f_0=f, \qquad f_{t_0}=f',\]
and the corresponding family of hyperplane arrangements $\A_t:f_t=0$ has constant intersection lattice, i.e.
\[L(\A_t) \cong L(\A_{t'}) \quad \text{for all } t,t'\in I.\]
If, in addition, the graded Betti numbers of the minimal resolutions of the Jacobian algebras $M(f_t)$ are constant for all $0<t\leq t_0$, then we say that the Ziegler pair $\A:f=0$ and $\A':f'=0$ satisfies condition $({\rm SPEC})$.
\end{enumerate}

\end{definition}
Examples of Ziegler pairs satisfying or not satisfying some of these conditions will be given below. Note that for line arrangements in 
$\PP^2$, a Ziegler pair always satisfies the condition ${\rm (HP)}$, since in this case we have $P(M(f))=\tau(\A)$ and $\tau(\A)$ is determined by the intersection lattice $L(\A)$.

It came as a surprise that for a plane arrangement $\A:f=0$ in $\PP^3$
the Hilbert polynomial is not determined by the intersection lattice $L(\A)$ of $\A$. Indeed, the Ziegler pair of plane arrangements
$$\A:f=xyzw(x + y + z)(2x + y + z)(2x + 3y + z)(2x + 3y + 4z)(3x + 5z)(3x + 4y + 5z)=0$$
and
$$\A':f'=xyzw(x + y + z)(2x + y + z)(2x + 3y + z)(2x + 3y + 4z)(x + 3z)(x + 2y + 3z)=0$$
have 
$$P(M(f))(u)=51u-223 \text{ and } P(M(f'))(u)=51u-222,$$
see \cite[Example 4.6]{MNS}. 
We introduce the following definition, which is widely used in hyperplane arrangement theory, see \cite{DHA,OT}. 
\begin{definition}
\label{defCone}
Let $R=\C[x,y,z]$ and consider a reduced curve $C:g=0$ in $\PP^2$, where $g \in R$ is a homogeneous polynomial of degree $d-1$.
The surface $S=cC:f=0$, where $f=wg$, is called the cone over the curve $C$.
\end{definition}
This terminology, which seems strange at first, is perhaps motivated by the fact that the complement
$U(f)= \PP^3 \setminus cC$ is exactly the complement in $\C^3$ of the usual cone over $C$ given in $\C^3$ by the equation $g=0$.

Note that the above plane arrangements are in fact cones over a Ziegler pair of line arrangements, namely $\A=c\B$ and $\A'=c\B'$, where
\begin{equation}
\label{exB}
\B:g=xyz(x + y + z)(2x + y + z)(2x + 3y + z)(2x + 3y + 4z)(3x + 5z)(3x + 4y + 5z)=0
\end{equation}
and
\begin{equation}
\label{exB'}
\B':g'=xyz(x + y + z)(2x + y + z)(2x + 3y + z)(2x + 3y + 4z)(x + 3z)(x + 2y + 3z)=0. 
\end{equation}
More information on this Ziegler pair is given in Example \ref{exI1}, where we show that it satisfies the condition ${\rm (SPEC)}$ and does not satisfy the condition $({\rm HF})$.
The first  result of this note says that we can use any such Ziegler pair of line arrangements to produce similar examples.

 \begin{thm}
\label{thm1}
Let $\B$ and $\B'$ be a pair of line arrangements in $\PP^2$.
\begin{enumerate}
\item If $\B:g=0$ and $\B':g'=0$ have isomorphic intersection lattices, then the cones $\A=c\B: f=wg=0$ and $\A'=c\B':f'=wg'=0$ have also  isomorphic intersection lattices.
\item If $\B:g=0$ and $\B':g'=0$ is a Ziegler pair satisfying the condition ${\rm (SPEC)}$ and not satisfying the condition $({\rm HF})$, then
$$P(M(f)) \ne P(M(f')).$$
\end{enumerate}

\end{thm}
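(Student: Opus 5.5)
For part (1), we will describe the intersection lattice of the central arrangement in $\C^4$ associated with $c\B$ explicitly. Its hyperplanes are $w=0$ together with the hyperplanes $\ell_i=0$, where the $\ell_i\in R=\C[x,y,z]$ are the linear forms defining $\B$. Since the $\ell_i$ do not involve $w$, every flat of $c\B$ is either of the form $X\times \C$ or of the form $X\times\{0\}$, with $X$ a flat of the central arrangement of $\B$ in $\C^3$. Conversely, every such set is a flat of $c\B$. This gives a lattice isomorphism
$$L(c\B)\cong L(\B)\times L_1,$$
where $L_1$ is the Boolean lattice of rank one. Consequently any isomorphism $L(\B)\cong L(\B')$ induces an isomorphism $L(\A)\cong L(\A')$.

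For part (2), we compute $M(f)$ in terms of $M(g)$. We have $f_x=wg_x$, $f_y=wg_y$, $f_z=wg_z$ and $f_w=g$. The Euler formula $(d-1)g=xg_x+yg_y+zg_z$ shows that $g\in J_g\subset R$. Hence
$$J_f=wJ_gS+gS.$$
We decompose $S=R[w]$ by powers of $w$, so that $S_k=\bigoplus_j R_{k-j}w^j$. In this decomposition the $w^0$-component of $J_f$ is $gR$, and for $j\ge1$ the $w^j$-component is $J_g$, using again $g\in J_g$. Therefore
$$\dim M(f)_k=\dim (R/gR)_k+\sum_{i=0}^{k-1}\dim M(g)_i .$$
The first term depends only on $d$. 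The Hilbert polynomial of $M(g)$ is the constant $\tau(\B)$, so for $k\gg0$ we get
$$P(M(f))(u)=(d-1+\tau(\B))\,u+c_d-\delta(g),\qquad \delta(g)=\sum_{i\ge0}\bigl(\tau(\B)-\dim M(g)_i\bigr).$$
Here $c_d$ depends only on $d$, and $\delta(g)$ is a finite sum. Since $\tau(\B)=\tau(\B')$ is determined by the lattice, it follows that $P(M(f))=P(M(f'))$ holds if and only if $\delta(g)=\delta(g')$.

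It remains to show that $\delta(g)\ne\delta(g')$ under the hypotheses. Condition (SPEC) gives a family $g_t$ with $g_0=g$ and $g_{t_0}=g'$. The graded Betti numbers of $M(g_t)$ are constant for $0<t\le t_0$. Since the Betti numbers determine the Hilbert function, $\dim M(g_t)_i=\dim M(g')_i$ for all $i$ and all $t>0$. Next we use upper semicontinuity. For each $i$, $\dim M(g_t)_i$ is the corank of the linear map $R_{i-d+2}^3\to R_i$, $(a,b,c)\mapsto a\,\partial_xg_t+b\,\partial_yg_t+c\,\partial_zg_t$, which depends continuously on $t$. Rank can only drop in the limit, so $\dim M(g)_i\ge\dim M(g')_i$ for all $i$.

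Failure of (HF) means the two Hilbert functions differ, so at least one of these inequalities is strict. Summing over $i$ gives $\delta(g)<\delta(g')$, and hence $P(M(f))\ne P(M(f'))$. The main point requiring care is this semicontinuity step, together with checking that the sums defining $\delta$ are finite. Finiteness holds because $\dim M(g)_i=\tau(\B)$ for $i$ large, uniformly along the family, by the constant Betti numbers for $t>0$ and by a direct regularity bound at $t=0$.
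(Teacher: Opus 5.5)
Your proof is correct. Part (2) has the same structure as the paper's proof: you express the constant term of $P(M(f))$ through partial sums of the Hilbert function of $M(g)$, then compare these sums by semicontinuity along the ${\rm (SPEC)}$ family. The difference is that the paper imports the formula for $P(M(f))$ from \cite[Proposition 3.13]{FS}, whereas you derive it from $J_f=wJ_gS+gS$ and the $w$-grading of $S=R[w]$.

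Your derivation gives more than the polynomial. The identity $\dim M(f)_k=\dim(R/gR)_k+\sum_{i<k}\dim M(g)_i$ holds in every degree, so the whole Hilbert function of the cone is determined by that of $g$. Your $\delta(g)$ equals the paper's $st(g)\tau(\B)-\sum_{j<st(g)}H(M(g))(j)$.

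Summing over all $i\ge 0$ also removes the need for the uniform bound $T=3(d-2)+1$, which the paper uses only to compare sums of equal length. For the same reason, your closing remark about stabilization ``uniformly along the family'' is unnecessary. Only the endpoints $g$ and $g'$ are compared, and each has an eventually constant Hilbert function.

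For part (1), the product decomposition $L(c\B)\cong L(\B)\times L_1$ replaces the paper's argument via lines in $\PP^3$ meeting the plane $w=0$. It is more explicit, and it gives the $k$-fold cone statement of Remark \ref{rkC} at once.
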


\begin{rk}
\label{rkC}
It is clear that the claim in Theorem \ref{thm1} (1) is valid, with essentially the same proof, for a pair $(\B,\B')$ of hyperplane arrangements in $\PP^n$. In particular, if  $(\B,\B')$ is a pair of line arrangements in $\PP^2$ with isomorphic intersection lattices, and we take the $k$-fold cones, that is if we set
$\A_k: w_1 \cdots w_k g=0$ and $\A_k': w_1 \cdots w_k g'=0$, where $w_1, \ldots , w_k$ are new variables, then the hyperplane arrangements $\A_k$ and $\A_k'$ in $\PP^{k+2}$ have isomorphic intersection lattices.
\end{rk}
The following result shows the big difference with the curve case, where
the Hilbert polynomial $P(M(g))$ is determined by the total Tjurina number of $C$, which for a line arrangement $\B:g=0$ is determined by the topology of the complement
$U(g)=\PP^2 \setminus \B$.
\begin{cor}
\label{corC} 
For plane arrangements $\A:f=0$ in $\PP^3$, the Hilbert polynomial $P(M(f))$ is determined neither by the topology of the complement $U(f)= \PP^3 \setminus \A$, nor by the topology of the corresponding Milnor fibration
$$\C^4 \setminus \{(x,y,z,w) \in \C^4 \ : \ f(x,y,z,w)=0\} \to \C^*, \ (x,y,z,w) \mapsto f(x,y,z,w).$$ 

\end{cor}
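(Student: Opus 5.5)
The plan is to use the Ziegler pair from the introduction: $\A=c\B$ and $\A'=c\B'$, with $\B,\B'$ the line arrangements in \eqref{exB} and \eqref{exB'}. We must show two things. First, $U(f)$ and $U(f')$ are homeomorphic, and the Milnor fibrations of $f$ and $f'$ are equivalent. Second, $P(M(f))\neq P(M(f'))$. The second fact is the computation $51u-223\ne 51u-222$ from \cite[Example 4.6]{MNS}. It also follows from Theorem \ref{thm1}(2), once Example \ref{exI1} establishes that $(\B,\B')$ satisfies $({\rm SPEC})$ and fails $({\rm HF})$. So the real work is the topological part.

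For the complements, we use the identification noted after Definition \ref{defCone}. On the chart $w\neq 0$ of $\PP^3$ we have
$$U(f)=\PP^3\setminus cC=\C^3\setminus\{g=0\},$$
and similarly for $f'$. So it is enough to compare the affine central arrangement complements $\C^3\setminus\{g=0\}$ and $\C^3\setminus\{g'=0\}$. There is a $\C^*$-equivariant decomposition
$$\C^3\setminus\{g=0\}\cong \C^*\times U(g), \qquad U(g)=\PP^2\setminus \B.$$
Hence it suffices to show $U(g)\cong U(g')$ topologically. We will use the $({\rm SPEC})$ property, or the fact from Example \ref{exI1} that $\B$ and $\B'$ lie in a lattice-isotopic (equisingular) family. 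A family of line arrangements with constant intersection lattice is a Whitney-equisingular family of curves. By Thom's first isotopy lemma, all members then have diffeomorphic complements, in fact diffeomorphic pairs $(\PP^2,\B_t)$. Alternatively, for such small arrangements we could invoke the known result that the lattice determines the complement's topology under lattice isotopy, due to Randell.

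For the Milnor fibrations, we apply the same isotopy argument to the family of affine hypersurfaces $f_t=wg_t=0$ in $\C^4$. Here $g_t$ is the lattice-constant family joining $g$ to $g'$. By Theorem \ref{thm1}(1), the cones $c\B_t$ have constant intersection lattice, so the central arrangements $f_t=0$ in $\C^4$ form a lattice isotopy. The Milnor fibration of a homogeneous polynomial is the global fibration $f_t:\C^4\setminus f_t^{-1}(0)\to\C^*$. This fibration is determined up to fiber-preserving homeomorphism by the stratified topology of $(\C^4,f_t^{-1}(0))$ together with the $\C^*$-action. Consider the total map
$$(x,t)\mapsto (f_t(x),t)$$
over the parameter interval. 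It is a stratified submersion with respect to the stratification by the flats of the arrangements, and the strata vary trivially because the lattice is constant. Thom's second isotopy lemma, after compactifying radially using homogeneity (restrict to a sphere, or use the $\C^*$-equivariance), will then trivialize the family of fibrations.

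The main obstacle is this last step: justifying that a lattice isotopy gives equivalent Milnor fibrations, which requires the Whitney and Thom $a_f$ conditions along the family. For the strata, I would first try an explicit argument. The flats move by linear isomorphisms that depend smoothly on $t$, and central arrangements with constant lattice are locally products near each flat. This gives $a_f$ stratum by stratum, and homogeneity removes the noncompactness problem. If that becomes delicate, a fallback is to cite the known theorem that lattice-isotopic arrangements have isomorphic Milnor fibrations, for example via Randell's work. Combined with $P(M(f))\neq P(M(f'))$, this proves the corollary.
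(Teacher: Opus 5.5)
Your proposal is correct and follows the paper's proof. The paper uses the same cones $\A=c\B$, $\A'=c\B'$ of Example \ref{exI1}, whose Hilbert polynomials $51u-223$ and $51u-222$ differ; the lattice-constant family from $({\rm SPEC})$, combined with Theorem \ref{thm1}(1), makes them lattice-isotopic central arrangements in $\C^4$, so Randell's theorem gives homeomorphic complements and equivalent Milnor fibrations. Your extra Thom-isotopy sketches, including the reduction $U(f)\cong\C^*\times U(g)$, only partially re-derive Randell's result, and your fallback citation of it is exactly the step the paper takes.
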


The second main result shows that the surfaces which are cones over curves enjoy nice properties in relation with tameness, a subtle property introduced and studied in \cite{ DmaxS,HD}.

\begin{thm}
\label{thm2}
Let $C:g=0$ be a reduced curve in $\PP^2$ of degree $d-1$ with ${\rm mdr}(g) > 0$. Consider the cone $S=cC:f=wg=0$ in $\PP^3$.
\begin{enumerate}

\item If $r_j=(a_j,b_j,c_j)$ for $j=1,\ldots,m$ is a minimal set of generators for the first syzygy $R$-module $D_0(g)$, then
$$\rho_0=(x,y,z,-(d-1)w)$$
and
$$\rho_j=(a_j,b_j,c_j,0)$$
 for $j=1,\ldots,m$ is a minimal set of generators for the first syzygy $S$-module $D_0(f)$. In particular, one has ${\rm mdr}(f) =1$. Moreover, if the minimal resolution for $D_0(g)$ is given by \eqref{res2A2},
then the minimal resolution for $D_0(f)$ has the form
\begin{equation}
\label{res2D01}
0 
 \longrightarrow \bigoplus_{j=1}^{q} S(-c_j)
   \longrightarrow S(-1) \oplus \left(\bigoplus_{i=1}^{p} S(-d_i)\right)
   \longrightarrow D_0(f).
\end{equation}
\item The surface $S=cC:f=wg=0$ is tame with respect to the pair $(\rho_0,\rho_1)$.
\item The surface $S$ is free (resp. nearly free, resp. strictly plus-one generated with respect to $\overline \rho_S=(\rho_0,\rho_1,\rho_2, \rho_3)$) if and only if the curve $C$ is free (resp. nearly free, resp. plus-one generated  with respect to $\overline \rho_C=(\rho_1,\rho_2, \rho_3)$).

\end{enumerate}

\end{thm}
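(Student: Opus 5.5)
The plan is to exploit the very simple shape of the partial derivatives of $f=wg$. These are
$$f_x=wg_x,\quad f_y=wg_y,\quad f_z=wg_z,\quad f_w=g,$$
and the Euler formula $xg_x+yg_y+zg_z=(d-1)g$ holds. Everything in Theorem \ref{thm2} should then follow from a direct-sum decomposition
$$D_0(f)=S\rho_0\oplus \bigl(D_0(g)\otimes_R S\bigr),$$
where $D_0(g)\otimes_R S$ is identified with the $S$-submodule of $D_0(f)$ of derivations with vanishing $\partial_w$-coefficient.

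\textbf{Step 1: generators of $D_0(f)$.} First, $\rho_0\in D_0(f)$ by the Euler formula, and each $\rho_j\in D_0(f)$ because $(a_j,b_j,c_j)$ kills $g$.

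Conversely, let $(a,b,c,e)\in D_0(f)$. Then
$$w(ag_x+bg_y+cg_z)=-eg.$$
Since $w\nmid g$, we get $e=we'$ and hence $ag_x+bg_y+cg_z=-e'g$. Adding $\frac{e'}{d-1}\rho_0$ kills the last coordinate and gives a triple $(a',b',c')$ with entries in $S=R[w]$ annihilating $g$. Because $S$ is flat over $R$ (one may expand in powers of $w$), such a triple is an $S$-combination of the $r_j$.

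The sum is direct: if $h\rho_0+\sum h_j\rho_j=0$, the $w$-coordinate gives $-(d-1)hw=0$, so $h=0$. Hence $D_0(f)\cong S(-1)\oplus (D_0(g)\otimes_RS)$.

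Minimality follows from this decomposition. $\rho_0$ cannot be expressed through the $\rho_j$. The $\rho_j$ stay minimal since the $r_j$ are minimal over $R$. The hypothesis ${\rm mdr}(g)>0$ guarantees that all $d_i\ge 1$, so no $\rho_j$ has degree $0$ and ${\rm mdr}(f)=1$.

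\textbf{Step 2: the resolution \eqref{res2D01}.} Tensoring the minimal resolution \eqref{res2A2} of $D_0(g)$ with the flat $R$-module $S$ gives a minimal resolution of $D_0(g)\otimes_RS$. Adding the free summand $S(-1)$ then yields \eqref{res2D01}.

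\textbf{Step 3: tameness (part (2)).} I would unwind the definition from \cite{DmaxS,HD}. My understanding is that it is a condition on the $2\times2$ minors of the $2\times 4$ matrix with rows $\rho_0,\rho_1$, roughly that these minors cut out a locus of the expected codimension, or have no common factor. The minors are
$$wa_1,\ wb_1,\ wc_1,\ xb_1-ya_1,\ yc_1-zb_1,\ xc_1-za_1 .$$
The key inputs would be two facts. First, $r_1$ is a minimal-degree syzygy, so $\gcd(a_1,b_1,c_1)=1$. Second, $r_1$ is not proportional to $(x,y,z)$, because $(x,y,z)$ does not kill $g$ when $d-1>0$. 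I expect this step to be the main obstacle, since everything depends on the precise form of the definition of tameness. Once the definition is fixed, the check should reduce to these gcd and non-proportionality facts.

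\textbf{Step 4: freeness types (part (3)).} These follow by comparing the resolution shapes. The surface $S$ is free iff $q=0$ in \eqref{res2D01}, iff $D_0(g)$ is free. In the nearly free and plus-one generated cases, the resolution has one relation of degree $c_1$, which is the same in both resolutions. The degree conditions on $c_1$ relative to the generator degrees, required in each definition, transfer verbatim from $\overline\rho_C=(\rho_1,\rho_2,\rho_3)$ to $\overline\rho_S=(\rho_0,\rho_1,\rho_2,\rho_3)$. The extra generator $\rho_0$ of degree $1$ plays the role of the "additional" Euler-type generator in the surface definitions.
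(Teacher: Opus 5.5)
\textbf{Parts (1) and (3).} Your Steps 1, 2 and 4 follow the paper's proof. The paper also writes the $\partial_w$-coefficient as $w$ times something and adds a multiple of $\rho_0$ to kill it. Your flatness argument makes precise the paper's remark that all second-order syzygies of $f$ come from those of $g$. For freeness the paper uses Saito's criterion via $\phi(f)=-w\phi(g)$ rather than the shape of the resolution, but your route is also fine.

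\textbf{The gap is Step 3.} You leave it open, and your hedge there matters. The criterion cannot be that the $2$-minors of $M(\rho_0,\rho_1)$ cut out a locus of the expected codimension $3$. That fails in general: if $a_1,b_1,c_1$ have a common zero $p\in\PP^2$ (which is not excluded), the whole plane $\C\hat p\times\C\subset\C^4$ over $p$ lies in the zero set, so the codimension is only $2$. What the paper proves, and what \cite[Lemma 2.3 (ii)]{DmaxS} or \cite[Theorem 3.3 and Remark 3.4]{HD} require, is weaker: the ideal $I_2$ of these minors must define a subset of codimension at least $2$ in $\C^4$. Equivalently, the six minors have no common factor.

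Your two facts give this immediately, but the argument must be written out:
\begin{itemize}
\item $\gcd(wa_1,wb_1,wc_1)=w$, because $r_1$ is primitive.
\item The minors $xb_1-ya_1$, $xc_1-za_1$, $yc_1-zb_1$ lie in $R$ and are not all zero. If they all vanished, then $r_1=\lambda(x,y,z)$ over the fraction field of $R$, and $0=r_1(g)=\lambda(d-1)g$ would force $r_1=0$.
\item Hence $w$ does not divide all six minors, and their gcd is $1$.
\end{itemize}

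The paper makes the same check geometrically. On $w\neq 0$ the zero set lies in $\{a_1=b_1=c_1=0\}$, which has dimension at most $2$ in $\C^4$ since $a_1,b_1,c_1$ have no common factor. On $w=0$ it is cut out by the minors of $M(\rho_0,\rho_1)(x,y,z,0)$. This matrix has rank $2$ over the fraction field of $R$, so its minors define a subset of $\PP^2$ of dimension at most $1$. Only once this codimension-$2$ statement is established does the cited lemma give tameness with respect to $(\rho_0,\rho_1)$.
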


\begin{rk}
\label{rkS}
It is clear that the claim in Theorem \ref{thm2} (1) is valid, with essentially the same proof, for a  hypersurface in $\PP^n$. In particular, if  $C:g=0$ is a curve in $\PP^2$, and we take the $k$-fold cones, that is if we set
$D: f_k=w_1 \cdots w_k g=0$, where $w_1, \ldots , w_k$ are new variables, then $D_0(f_k)$ is generated by $\rho_j$ for $j=1,\ldots,m$ and $k$ additional syzygies, namely
$$\theta_i=x\partial_x+y\partial_y+z\partial_z - dw_i\partial_{w_i}$$
for $i=1,\ldots,k$, where $\deg g = d$.
\end{rk}

One may ask if plane arrangements which are not cones over line arrangements behave nicer with respect to Hilbert polynomials. Example \ref{ex1} in Section 4 shows that this is not the case. To better understand this example, we study a related moduli space in Proposition \ref{ellp}. 

Finally in Section 4 we construct several examples of Ziegler pairs of plane arrangements in $\PP^3$ which are not cones, but satisfy the condition $({\rm HF})$ in Definition \ref{defZP}.

\section{Proof of the main results}

\subsection{Proof of Theorem \ref{thm1}}
To prove (1), note that a family of lines $L_1, \ldots, L_s$ in $\B$ intersect in a point in $\PP^2$ if and only if the corresponding planes in $\A$,
that is those defined by the same equations, intersect in a line $L$ in $\PP^3$. Since any line $L$ in $\PP^3$ intersects the plane $P:w=0$, it follows that the lattice $L(\B)$ determines the lattice $L(\A)$.

To prove the claim (2), we use the following equality, see \cite[Proposition 3.13]{FS},
$$P(M(f))(u)=au+b,$$
where
$$a=\tau(\B)+d-1$$
where $d-1$ is the number of lines in $\B$, and
$$b=\sum_{j=0}^{st(g)-1} H(M(g))(j)-st(g)\tau(\B)-\frac{(d-4)(d-1)}{2}.$$
Here $H(M(g))(j) =\dim M(g)_j$ is the Hilbert function of the Jacobian algebra $M(g)$ and
$$st(g)=\min\{j_0 \ : \ H(M(g))(j)=\tau(\B) \text{ for any } j \geq j_0\}$$ 
is the stabilization threshold of $g$. It is clear from the above formulas that in the equality for $b$ we may replace $st(g)$ by any larger integer, namely by $T=3(d-2)+1$.
Since $(\B,\B')$ satisfies the condition ${\rm (SPEC)}$, we get the inequality by the semicontinuity of the Hilbert functions
$$\sum_{j=0}^{T-1} H(M(g))(j) \geq \sum_{j=0}^{T-1} H(M(g'))(j).$$
Since $(\B,\B')$ does not satisfy the condition $({\rm HF})$, it follows that this inequality is strict and hence
$$P(M(f))(0)=b \neq b'=P(M(f'))(0).$$
\subsection{Proof of Theorem \ref{thm2}}
Let $\rho'=a' \partial_x+b' \partial_y+c '\partial_z+d' \partial_w$ be a derivation in $D_0(f)$. Then we get
$$w(a'g_x+b'g_y+c'g_z)+d'g=0.$$
If $d'=0$, then $\rho'$ is in the $S$-submodule $D'$ generated in $D_0(f)$ by the derivations $\rho_j$'s for $j=1,\ldots,m$. If $d' \ne 0$, then $d'=wd_1$ for some $d_1 \in S$, and hence
$$\rho'+\frac{d_1}{d-1}\rho_0 \in D'.$$
Hence $\rho_j$ for $j=0,\ldots,m$ minimally generate $D_0(f)$, and clearly all secondary syzygies involving the $\rho_j$ come from the secondary syzygies of $g$. This proves the first claim (1).

To prove (2), consider the $2 \times 4$ matrix $M(\rho_0, \rho_1)$ having as first row the components of $\rho_0$ and as second row the components of $\rho_1$. Let $I_2$ be the ideal in $S$ generated by the 2-minors of $M(\rho_0, \rho_1)$. Note that
$wa_1,wb_1$ and $wc_1$ are in $I_2$. If $w \ne 0$, then the equations
$$a_1=b_1=c_1=0$$
which defines a one-dimensional subspace in $\PP^3$.
Indeed, $a_1=b_1=c_1=0$ define a 0-dimensional subspace in $\PP^2$,
since $a_1,b_1,c_1$ have no common factor in $R$, the syzygy $\rho_1$ being primitive.
If $w=0$, then note that the matrix $M'=M(\rho_0, \rho_1)(x,y,z,0)$ 
has rank 2 over the fraction field of $R$, since a relation
$$aE+b\rho_1=0$$
with $a,b \in R$ not both 0 and $E=(x,y,z)$ is impossible.
It follows that the 2-minors in $M'$ define subset in $\PP^2$ of dimension at most 1. These two cases show that the ideal $I_2$ defines a subset of codimension at least 2 in $\C^4$, and the claim (2) follows using \cite[Lemma 2.3 (ii)]{DmaxS} or \cite[Theorem 3.3 and Remark 3.4]{HD}.

To prove the claim in (3) about the free surface $S$, we apply Saito's Freeness Criterion, see \cite[Theorem 8.1]{DHA}. This claim is then clear, since the corresponding two determinants $\phi(g)$ and $\phi(f)$
are related by the formula
$$\phi(f)=-w\phi(g)$$
when we use generators for $D_0(f)$ and $D_0(g)$ as in the claim (1).
An alternative proof can be obtained using \cite[Theorem 4.1 (4)]{HD}.
The claims about nearly free or strictly plus-one generated surfaces with respect to
$$\overline \rho=(\rho_0,\rho_1,\rho_2, \rho_3)$$
 follow from
\cite[Definition 1.1]{HD} and the fact that the second order syzygies of $g$ and $f$ coincide. For the definition of (strictly) plus-one generated hyperplane arrangements we refer to \cite{Abe}, where this notion was introduced. We recall that a plane curve is strictly plus-one generated if and only if it is  plus-one generated, see \cite{Abe,3syz}.

\section{Some examples of cones and the proof of Corollary \ref{corC}}
\begin{ex}
\label{exI1}
It is straightforward to verify that the Ziegler pair $(\B,\B')$ from \eqref{exB} and \eqref{exB'} satisfies neither condition $({\rm HF})$ nor condition ${\rm (MDR)}$. Indeed, if the minimal graded resolution of the $R$-module $D_0(g)$ is given by
 \begin{equation}
\label{res2A2}
0 \longrightarrow 
 \longrightarrow \bigoplus_{j=1}^{q} R(-c_j)
   \longrightarrow \bigoplus_{i=1}^{p} R(-d_i)
   \longrightarrow D_0(g),
\end{equation}
then we call the increasing sequences of degrees 
$${\bf d}=(d_1, \ldots, d_p) \text{ and }  
{\bf c}=(c_1, \ldots, c_q) $$ 
  the {\it graded Betti numbers of the Jacobian algebra} $M(g)$. Note that there is a small shift in notation when compared to \cite{Betti2}.
Then, a direct computation shows that the graded Betti numbers of $g$ are
$${\bf d}=(6_6) \text{ and } {\bf c}=(7_4)$$
with the obvious notations as explained in \cite{Betti3},
while the graded Betti numbers of $M(g')$ are
$${\bf d}=(5,6_3) \text{ and } {\bf c}=(7,8).$$
Geometrically, $\B$ and $\B'$ are constructed from a hexagon $\HH$, by adding the 3 main diagonals. The 6 vertices of the hexagon are situated on a conic for $\B'$, but not for $\B$, see \cite{ZP} for more details on this construction. This 
Ziegler pair $(\B,\B')$ satisfies the condition ${\rm (SPEC)}$. Indeed, it follows from \cite[Theorem 1.6 and Theorem 1.8]{ZP} that the family 
$$f_t=xyz(x + y + z)(2x + y + z)(2x + 3y + z)(2x + 3y + 4z)$$
$$((3-t)x + (5-t)z)((3-t)x + (4-t)y + (5-t)z)$$
for $t \in [0,2]$ satisfies all the properties from Definition \ref{defZP} (4).

By writing the Hilbert function as a Hilbert series, one has the following equalities
$$HF(g)=1+3t+6t^2+10t^3+15t^4+21t^5+28t^6+36t^7+42t^8+46t^9+48t^{10}+$$
$$+48t^{11}+46t^{12}+42t^{13}+\ldots$$
and
$$
HF(g')=1+3t+6t^2+10t^3+15t^4+21t^5+28t^6+36t^7+42t^8+46t^9+48t^{10}+$$
$$+48t^{11}+46t^{12}+43t^{13}+42t^{14}+ \ldots$$
If we write $P(M(f))(u)=au+b$ and $P(M(f'))(u)=a'u+b'$, we have
$d=10$, $st(g)=13$ and $st(g')=14$. The above formulas yield
$$a=a'= 42+(10-1)=51$$
and
$$b=-223  \text{ and } b'=-222.$$

\end{ex}
\subsection{Proof of Corollary  \ref{corC}}

To prove this claim, it is enough to recall that two central hyperplane arrangements that are lattice-isotopic have equivalent Milnor fibrations and complements, see \cite[Theorem 5.1]{DHA} for the precise statement and \cite{R} for the proof of this deep result. Our discussion above in Example \ref{exI1} shows that $f$ and $f'$ define lattice-isotopic central arrangements in $\C^4$, and hence all the topological  invariants of the corresponding Milnor fibrations or of the complements $U(f)$ and $U(f')$ are the same.
\begin{ex}
\label{ex0}
Using the graded Betti numbers of the Ziegler pair of line arrangements
$\B:g=0$ and $\B':g'=0$ from Example \ref{exI1} and Theorem \ref{thm2} (1), we get the graded Betti numbers of $M(f)$ where $f=wg$ as follows
$${\bf d}=(1,6_6) \text{ and } {\bf c}=(7_4),$$
while the graded Betti numbers of $M(f')$, where $f'=wg'$, are
$${\bf d}=(1,5,6_3) \text{ and } {\bf c}=(7,8).$$

Let us consider the following hyperplane arrangements in $\mathbb{P}^4$:
{\small
\[
\mathcal{D} : g = xyzw_{1}w_{2}(x + y + z)(2x + y + z)(2x + 3y + z)(2x + 3y + 4z)(3x + 5z)(3x + 4y + 5z) = 0,
\]
and
\[
\mathcal{D}' : g' = xyzw_{1}w_{2}(x + y + z)(2x + y + z)(2x + 3y + z)(2x + 3y + 4z)(x + 3z)(x + 2y + 3z) = 0,
\]}
which are double cones over the line arrangements in Example \ref{exI1}.
In light of Remark~\ref{rkS}, we can anticipate the structure of the modules $D_{0}(g)$ and $D_{0}(g')$. In both cases, besides the derivations inherited from the corresponding arrangements in the $(x,y,z)$-variables, there are two additional derivations given by
\[
\theta_i = x\partial_x + y\partial_y + z\partial_z - 9\,w_i\partial_{w_i}, \qquad i \in \{1,2\}.
\]
Thus, the original derivations persist, and the extension to $\mathbb{P}^4$ contributes these two extra logarithmic derivations corresponding to the variables $w_1$ and $w_2$.
\end{ex}

\section{Some generic sections of double cones}

We start with an example of a Ziegler pair not involving cones and still failing property $({\rm HP})$.
\begin{ex}
\label{ex1}
For
$$\A: f= (w+x-3y+5z)wxy(x-y-z)(x-y+z)(2x+y-2z)(x+3y-3z)$$
$$(3x+2y+3z)(x+5y+5z)(7x-4y-z)=0$$
we get that the Hilbert polynomial $61u-307$,
while for 
$$\A': f'=(w+x-3y+5z)wxy(4x-5y-5z)(x-y+z)(16x+13y-20z)$$
$$(x+3y-3z)(3x+2y+3z)(x+5y+5z)(7x-4y-z)=0$$
we get the Hilbert polynomial $61u-308$.
It is not difficult to show that $\A$ and $\A'$ are irreducible, hence not cones, and have isomorphic intersection lattices. In fact, the graded Betti numbers of $M(f)$ as defined above are
$${\bf d}=(2,6_2,7_7), \ {\bf c}=(7,8_7,9_2) \text{ and } {\bf b}=(9_2,10),$$
 while the 
the graded Betti numbers of $M(f')$  are
$${\bf d}=(2,7_{13}), \ {\bf c}=(8_{16}) \text{ and } {\bf b}=(9_5).$$
Hence ${\rm mdr}(f)={\rm mdr}(f') >1$, which shows that $\A$ and $\A'$ are not cones using Theorem \ref{thm2} (1).

Geometrically, this example is obtained as follows. We start with a pair of Ziegler line arrangements and apply twice the coning construction. In this way we obtain a pair of hyperplane arrangements $\CC$ and $\CC'$ in $\PP^4$, having isomorphic intersection lattices, as explained in Remark \ref{rkC}. Then $\A$ and $\A'$ are obtained by taking a generic hyperplane section of $\CC$ and $\CC'$,
and this construction preserves the isomorphism of intersection lattices.
\end{ex}
Since the Ziegler pair $(\mathcal{A}, \mathcal{A}')$ in Example \ref{ex1} has an interesting geometry, we now determine the moduli space of arrangements that share the same intersection poset $L(\mathcal{A})$. 

In order to do so we need a short preparation.
Let $M = (E, \mathcal{B}(M))$ be a rank $n$ matroid on a ground set $E$ with $|E| = d$, realizable over a field $\mathbb{F}$. Recall that a realization of $M$ is an arrangement of hyperplanes
\[
\mathcal{A} = \{H_1,\dots,H_d\} \subset \mathbb{P}^{n-1}_{\mathbb{F}}
\]
such that
\[
\mathcal{B}(M) = \left\{ \lambda \subseteq E : |\lambda| = n,\ \bigcap_{i\in\lambda} H_i = \emptyset \right\}.
\]

Choosing homogeneous coordinates, we represent $\mathcal{A}$ by a family $\V$ of vectors $v_i \in  \mathbb{P}^{n-1}_{\mathbb{F}}$ for $i \in E$, where $v_i$ defines the hyperplane $H_i$. Then $\V$ realizes $M$ if and only if for every $\lambda \subseteq E$ with $|\lambda| = n$,
\[
\det(A_\lambda) \neq 0 \iff \lambda \in \mathcal{B}(M),
\]
where $A_\lambda$ denotes the corresponding $n \times n$ matrix obtained by using as columns some representatives $v_i' \in \mathbb{F}^n$ of the vectors $v_i$ for $i \in \lambda$.

Thus, the set of realizations is given by
\[
\widetilde{\mathcal{R}}(M;\mathbb{F}) \subset (\mathbb{P}^{n-1}_{\mathbb{F}})^d,
\]
a Zariski locally closed subset defined by determinantal equalities and non-vanishing conditions.
Two realizations differing by a projective change of coordinates define the same arrangement. This corresponds to the natural diagonal action of $\mathrm{PGL}(n,\mathbb{F})$ on $\PP=(\mathbb{P}^{n-1}_{\mathbb{F}})^d,$ which induces a natural action on
$\widetilde{\mathcal{R}}(M;\mathbb{F})$. The \emph{realization space} of $M$ is then defined as the quotient
\[
\mathcal{R}(M;\mathbb{F}) := \widetilde{\mathcal{R}}(M;\mathbb{F}) / \mathrm{PGL}(n,\mathbb{F}).
\]
The space $\mathcal{R}(M;\mathbb{F})$ is a moduli space parametrizing all realizations of $M$ up to projective equivalence. In particular, it can be viewed using GIT as a quasi-affine variety, that is an open subset of an affine variety, when the field $\mathbb{F}$ is algebraically closed of characteristic $0$. 
 Indeed, all the equations $\det(A_\lambda) = 0$ for $ \lambda \notin \mathcal{B}(M)$, define a closed subvariety $Z$ in $\PP$. One can consider only essential arrangements, and then the set $\mathcal{B}(M)$ is non-empty. If we choose one element
$ \lambda_0 \in \mathcal{B}(M)$, then the condition $\det(A_{\lambda_0}) \ne 0$ defines an open subset in $\PP$ which is an affine variety $U$. It follows that $Y=Z \cap U$ is an affine variety and
$\widetilde{\mathcal{R}}(M;\mathbb{F}) $ is clearly an open subset of $Y$.
\begin{prop}
\label{ellp}
The realization space $\mathcal{R}(\mathcal{A},\mathbb{C})$ of arrangements described in Example \ref{ex1} is a Zariski-dense open subset in $\mathbb{A}^{5}_{\mathbb{C}}$.
\end{prop}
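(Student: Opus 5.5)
We plan to compute the realization space directly, by normalizing the arrangement with $\mathrm{PGL}(4,\C)$ and solving the remaining incidence conditions. We read off the matroid $M$ of $\A$ from Example \ref{ex1}: the 11 planes
$$H_0: w+x-3y+5z,\ H_1: w,\ H_2: x,\ H_3: y,\ H_4,\ldots ,H_{10},$$
where $H_4,\ldots,H_{10}$ do not involve $w$. By construction (a generic section of a double cone), the planes $H_2,\ldots,H_{10}$ all contain the point $p=(0:0:0:1)$. They form the cone over the 9-line arrangement $\B_0: xy(x-y-z)\cdots(7x-4y-z)=0$ in $\PP^2$. The planes $H_0,H_1$ are in general position with respect to everything else, except that they meet $H_0\cap H_1$ in a line lying inside the plane $x-3y+5z=0$ through $p$.

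The first step is to record the dependent 4-subsets (resp. 3-subsets) of $M$. These come exactly from the triple and higher points of $\B_0$, together with the genericity of $H_0,H_1$. This can be checked by computing the $\binom{11}{4}$ determinants.

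Next we normalize. Acting by $\mathrm{PGL}(4,\C)$, we send the common point of $H_2,\ldots,H_{10}$ to $(0:0:0:1)$. Since a rank-3 flat of $M$ containing 9 elements forces this point, every realization has this shape. We then put $H_1=\{w=0\}$. The stabilizer still allows $w\mapsto w+\ell(x,y,z)$ and scaling, and we use this to put $H_0=\{w+\ell_0=0\}$ with the remaining freedom absorbed. The choice of $H_0$ is then a point of an affine space modulo this action.

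The data then splits into two pieces. The first is a realization of the matroid of $\B_0$ in $\PP^2$ modulo $\mathrm{PGL}(3)$. The second is the residual parameter of $H_0$, a linear form $\ell_0$ up to the remaining symmetries, subject to genericity. Because the $w\mapsto w+\ell$ translations are spent on normalizing $H_0$, the $\ell_0$-data may in fact be absorbed entirely. In that case the parameter count of $\mathcal{R}(\A)$ equals that of $\mathcal{R}(\B_0)$ plus whatever remains from the position of the line $H_0\cap H_1$ relative to $\B_0$. That line projects to a line in $\PP^2$ that is generic with respect to $\B_0$, which contributes 2 parameters minus any identifications. Accounting carefully for the stabilizers is the key bookkeeping.

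For $\mathcal{R}(\B_0)$ we fix four lines in general position among the nine as $x,y,z,x+y+z$ via $\mathrm{PGL}(3)$. The other lines are then written with unknown coefficients, and each triple point imposes a $3\times 3$ determinant condition. We solve these successively. Since $\B_0$ is of the hexagon-with-diagonals type from \cite{ZP} (or a similar Ziegler configuration), each new line is determined up to a free parameter, or linearly by earlier ones. The expected outcome is that $\mathcal{R}(\B_0)$ is a dense open subset of $\mathbb{A}^3$, cut out by the nonvanishing of the determinants for bases. Adding the 2 parameters of the generic line gives $\mathbb{A}^5$, with the complement of the realization space being a union of hypersurfaces where some basis determinant vanishes. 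This complement is proper, since the given $\A$ lies in the set, so the open set is Zariski dense.

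The main obstacle is the step of solving the incidence equations for $\B_0$. We must show that they are \emph{linear} in successively introduced unknowns, so that the solution set is rational and in fact an open subset of affine space, rather than a variety with several components or a nontrivial curve. We will order the lines so that each new line passes through two already-determined intersection points or contributes one free slope, following the construction in \cite{ZP}. We then verify, essentially by direct computation, that no extra collinearity is forced and no required one becomes impossible.
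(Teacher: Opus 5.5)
There is a genuine gap: you have read off the wrong matroid from Example \ref{ex1}, and your count $3+2=5$ comes out right only because two errors cancel.

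\textbf{First error: $H_0$ is not generic.} $H_0$ is \emph{not} in general position with respect to the cone. Projecting $H_0\cap H_1$ from $p$ gives the line $L=\{x-3y+5z=0\}$. This line passes through the double point $(5:0:-1)=\{y=0\}\cap\{x+5y+5z=0\}$ of $\B_0$. Equivalently, $(w+x-3y+5z)-w+8y=x+5y+5z$, so $\{H_0,H_1,\{y=0\},\{x+5y+5z=0\}\}$ is a dependent $4$-subset that does not come from a triple point. Hence $L$ is confined to a pencil and contributes one parameter, not two. This is visible in the paper's OSCAR matrix: the last nine columns have first entry $0$ (the cone structure), and column~2 minus column~1 equals column~4 plus column~10.

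\textbf{Second error: $\dim\mathcal{R}(\B_0)$ is $4$, not $3$.} $\B_0$ is exactly the hexagon-with-main-diagonals configuration of \cite{ZP}. Its only points of multiplicity $\geq 3$ are the six triple points $(0:1:1)$, $(0:1:-1)$, $(1:0:1)$, $(1:0:-1)$, $(3:4:5)$, $(5:12:-13)$. Each of the nine lines contains exactly two of them, and the incidence graph is $K_{3,3}$. A realization is therefore determined by these six points, so $\mathcal{R}(\B_0)$ is $4$-dimensional ($2\cdot 9-8-6=4$), not an open subset of $\mathbb{A}^3$. The ``vertices on a conic'' locus is a divisor inside it. The correct count is $4+1$. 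For the matroid you actually describe (only triple-point dependencies, $L$ generic), the realization space would be $6$-dimensional, so as written your argument computes the wrong space.

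\textbf{Normalization error.} Once $p$ and $H_1=\{w=0\}$ are fixed, the substitutions $w\mapsto w+\ell(x,y,z)$ no longer preserve $H_1$. The stabilizer of $(p,H_1)$ is $\mathrm{GL}(3)\times\C^*$ modulo scalars. Writing $H_0=\{w+\ell_0=0\}$, the scaling $w\mapsto \alpha w$ absorbs the scale of $\ell_0$. The remaining datum of $H_0$ is exactly the line $L=\{\ell_0=0\}$, which cannot be absorbed. So $\mathcal{R}(\A)$ is the realization space of the $10$-line arrangement $\B_0\cup L$ in $\PP^2$.

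\textbf{How to repair it.} With these corrections your route works, and it is more conceptual than the paper's proof, which is just the OSCAR computation of the parametrization and its forbidden locus.
\begin{enumerate}
\item Fix four of the six triple points as a projective frame, chosen so that a line of $\B_0$ is the line at infinity. The other two triple points then cannot lie on it, so they range over an open subset of $\mathbb{A}^2\times\mathbb{A}^2$.
\item Write $L=\ell_a+s\,\ell_b$ with $s\in\mathbb{A}^1$, where $\ell_a,\ell_b$ are the two lines through the distinguished double point.
\item Remove the finitely many hypersurfaces where extra coincidences occur. This yields a dense open subset of $\mathbb{A}^5$.
\end{enumerate}
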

\begin{proof}
Using the \texttt{OSCAR} procedure presented in \cite{OSCAR}, we can compute the realization space $\mathcal{R}(\mathcal{A})$. The columns of the matrix below correspond to normal vectors of arrangements whose interaction lattice is isomorphic to that of $\mathcal{A}$. We have
\[
\left[
\begin{array}{ccccccccccc}
1 & 1 & 0 & 0 & 0 & 0 & 0 & 0 & 0 & 0 & 0 \\
0 & 1 & x_1 x_5 - x_4 x_5 & 1 & x_2 & x_5 & 1 & 1 & 0 & 0 & 0 \\
0 & 1 & x_1 x_3 x_5 - x_1 x_4 + x_1 - x_3 x_5 & 0 & x_3 & x_1 & x_3 & x_3 & 1 & 1 & 0 \\
0 & 0 & x_1 x_5 - x_4 x_5 & 0 & x_2 & x_1 x_5 & x_2 & x_4 & x_5 & 0 & 1
\end{array}
\right]
\]
in the multivariate polynomial ring
\[
\mathbb{C}[x_1, x_2, x_3, x_4, x_5],
\]
avoiding the zero loci of the following forbidden locus of polynomials:
\[
\begingroup
\setlength{\jot}{0pt}
\begin{aligned}
{\rm ForbiddenLocus}(\mathcal{R}(\mathcal{A}))= \{&x_1 - x_3 x_5,\; x_2 - 1,\; x_3,\; x_4 - 1,\; x_3 - 1,\; x_1 x_2 - x_3 x_5,\\
&x_1,\; x_1 - 1,\; x_1 - x_5,\\
&x_1 x_2 x_3 x_5 - x_1 x_2 x_4 + x_1 x_2 - x_1 x_3 x_5 - x_2 x_3 x_5 + x_3 x_4 x_5,\\
&x_2 - x_3,\; x_1 x_3 x_5 - x_1 x_4 + x_1 - x_3 x_5,\\
&x_1 x_3 x_5 - x_1 x_4 - x_1 x_5 + x_1 - x_3 x_5 + x_4 x_5,\\
&x_5,\; x_2 - x_4,\; x_1 - x_4,\; x_4,\; x_1 - x_2,\; x_2,\\
&x_3 x_5 - x_4,\; x_2 x_3 x_5 - x_2 x_4 + x_2 - x_3 x_5,\\
&x_3 x_5 - x_4 - x_5,\; x_2 - x_3 x_5,\\
&x_1 x_2 - x_1 x_4 - x_2 x_4 + x_3 x_4 x_5 - x_3 x_5 + x_4,\\
&x_2 - x_3 x_5 + x_5,\; x_2 x_5 + x_2 - x_3 x_5,\\
&x_1 x_3 x_5 - x_1 x_4 - x_1 x_5 - x_3 x_5 + x_4 x_5 + x_4,\\
&x_1 x_3 x_5 - x_1 x_4 - x_1 x_5 + x_4 x_5,\\
&x_2 x_3 + x_2 x_4 - x_2 - x_3 x_4,\\
&x_1 x_2 - x_1 x_3 x_5 + x_1 x_5 - x_2 x_5,\\
&x_1 x_2 x_3 x_5 - x_1 x_2 x_4 - x_1 x_2 x_5 + x_1 x_2 - x_1 x_3 x_5 + x_1 x_5\\
&\quad - x_2 x_3 x_5 + x_2 x_4 x_5 + x_3 x_4 x_5 - x_4 x_5,\\
&x_1 x_2 x_5 + x_1 x_2 - x_1 x_3 x_5 - x_2 x_5\}.
\end{aligned}
\endgroup
\]
\end{proof}
\noindent
Let us work in the setting of Proposition~\ref{ellp}. Denote by
\[
Q(x_{1},x_{2},x_{3},x_{4},x_{5})
\]
the defining equation of the arrangement
\[
\mathcal{A}_{(x_{1},x_{2},x_{3},x_{4},x_{5})},
\]
viewed as a point in the realization space $\mathcal{R}(\mathcal{A},\mathbb{C})$.

For a sufficiently general choice of parameters \((x_{1},x_{2},x_{3},x_{4},x_{5})\), the graded Betti numbers of the module \(M(Q(x_{1},x_{2},x_{3},x_{4},x_{5}))\) are as stated below. This can be confirmed by a direct computation for a general member, for instance the quintuple \((2,3,5,7,11)\), and then we have the expected graded Betti numbers:
\[
\mathbf{d}=(2,7_{13}), \qquad 
\mathbf{c}=(8_{16}), \qquad 
\mathbf{b}=(9_{5}).
\]

It is natural to consider the locus \(\mathcal{X} \subset \mathcal{R}(\mathcal{A}, \mathbb{C})\) consisting of those admissible quintuples \((x_{1},x_{2},x_{3},x_{4},x_{5})\) for which the graded Betti numbers of \(M(Q(x_{1},x_{2},x_{3},x_{4},x_{5}))\) take the form
\[
\mathbf{d}=(2,6_{2},7_{7}), \qquad 
\mathbf{c}=(7,8_{7},9_{2}), \qquad 
\mathbf{b}=(9_{2},10).
\]

Computational evidence suggests that $\mathcal{X}$ contains a family of points of the form
\[
(7,2,s,-5,t),
\]
where \(s,t\) range over values that avoid the forbidden locus of \(\mathcal{R}(\mathcal{A},\mathbb{C})\) -- for instance, sufficiently large or sufficiently small choices. This suggests the existence of a two-parameter family inside $\mathcal{X}$.

In addition, we have been able to verify that for a fixed, sufficiently general choice of \(s\), there exists a one-parameter subfamily of the form
\[
(7,2,s,-5,t)
\]
contained in \(\mathcal{X}\), with \(t\) varying in an admissible range. This provides rigorous evidence for the existence of at least a one-dimensional component of \(\mathcal{X}\) of this form.

Assuming that the two-parameter behavior observed in the computational sampling persists beyond the sampled instances -- which still requires rigorous verification -- we are led to the conclusion that
\[
\dim \mathcal{X} \geq 2.
\]

A more precise characterization of the admissible pairs \((s,t)\) should in principle follow from an explicit description of the forbidden locus of \(\mathcal{R}(\mathcal{A},\mathbb{C})\). Establishing this rigorously would also confirm the existence of the full two-parameter family. However, such a refinement is not necessary for the present heuristic lower bound, which remains contingent on verifying the persistence of the observed family.

\section{The Elliptic Matroids $T_n$}

Let $n \ge 4$. We define a rank-$3$ matroid $T_n$ on the ground set
\[
E := \mathbb{Z}/n\mathbb{Z}
\]
by declaring a triple $\{i,j,k\} \subset E$ to be dependent if and only if
\[
i + j + k \equiv 0 \pmod n.
\]
All other triples are bases.

\medskip

\noindent
This defines a matroid. Non-emptiness and uniform rank are immediate. For the exchange axiom, let $A=\{a,x,y\}$ and $B$ be bases with $a \in A \setminus B$. A set $\{x,y,b\}$ fails to be a basis if and only if $x+y+b \equiv 0$, which determines at most one $b \in E$. Since $B \setminus A \neq \varnothing$, one can choose $b \in B \setminus A$ avoiding this value, and $(A\setminus\{a\}) \cup \{b\}$ is a basis.

\medskip

\noindent
The construction is invariant under affine transformations of the form
\[
i \longmapsto ui + a \,\text{ such that }\, u \in (\mathbb{Z}/n\mathbb{Z})^{*} \text{ and } 3a\equiv 0({\rm mod}\, n),
\]
so $\mathrm{Aut}(T_n)$ contains $(\mathbb{Z}/n\mathbb{Z})^{*}$.

\medskip

\noindent
A realization of $T_n$ over $\mathbb{C}$ is a family of points
\[
(p_i)_{i \in E} \subset \mathbb{P}^2
\]
such that
\[
p_i,p_j,p_k \text{ are collinear } \iff i+j+k \equiv 0 \pmod n.
\]
Equivalently, writing $p_i$ as homogeneous column vectors, the realization space is cut out by the conditions
\[
\det(p_i,p_j,p_k)=0 \quad \text{for all } i+j+k \equiv 0 \pmod n,
\]
together with the non-vanishing of the remaining $3\times 3$ minors.

\medskip

\noindent
Assume now that \(n\geq 7\). Then there exist four elements of \(E\)
which do not contain a dependent triple; for instance, one may take
\(\{0,1,2,3\}\). After a projective change of coordinates, every realization
in the corresponding open chart may therefore be written in the normalized
form
\[
p_0=[1:0:0],\qquad
p_1=[0:1:0],\qquad
p_2=[0:0:1],\qquad
p_3=[1:1:1].
\]
The remaining points are not uniquely determined by this normalization.
Rather, the incidence relations
\[
\det(p_i,p_j,p_k)=0
        \qquad\text{for } i+j+k\equiv 0 \pmod n
\]
express the remaining points in terms of one auxiliary parameter, together
with the non-vanishing conditions excluding coincidences and additional
collinearities.

Geometrically, these incidence relations force the points \(p_i\) to lie on
a plane cubic curve. In the non-degenerate case, after choosing an origin on
a smooth cubic \(E\), the collinearity condition is equivalent to the group-law
condition
\[
p_i+p_j+p_k=0
        \qquad\text{on } E.
\]
Thus non-degenerate realizations of \(T_n\) are obtained from pairs
\((E,P)\), where \(E\) is a smooth plane cubic and \(P\in E\) is a point of
exact order \(n\), by setting
\[
p_i=iP,\qquad i\in \mathbb Z/n\mathbb Z.
\]
One has to remove the finite degeneracy locus where some of the points
coincide or where extra collinearities occur. Consequently, for \(n\geq 10\), after removing the
boundary and the degenerate configurations, the moduli space of non-degenerate
realizations of \(T_n\) is an open part of \(X_1(n)\) -- for details please consult \cite{BR}.

We now briefly discuss the geometry of realizations, i.e. line arrangements realizing the matroids $T_n$. These configurations are closely related to classical constructions arising from the orchard problem. In particular, they can be traced back to the work of Burr, Gr\"unbaum, and Sloane \cite{Burr}, where geometric properties of plane cubic curves were used to construct extremal configurations of points and lines with many triple intersections.

In the case of $T_n$, the defining condition
\[
i + j + k \equiv 0 \pmod n
\]
has a direct geometric interpretation: the corresponding lines $\ell_i, \ell_j, \ell_k$ intersect in a common point. Thus, this condition encodes precisely the triple intersection structure of the arrangement.

From a numerical perspective, this yields an arrangement with exactly
\[
n_3 = 1 + \left\lfloor \frac{n(n-3)}{6} \right\rfloor
\]
triple points \cite[A058212]{OEIS}, while all remaining intersection points are double points.

Having established this description, we now turn to the specific case of $n=10$, which was previously studied in \cite{CP}.
\vskip0.3cm
\paragraph{\bf{Matroid} $T_{10}$.}

Consider the family of line arrangements defined by
\[
\begin{aligned}
Q_{t}(x,y,z)=\;& xyz(x+y+z)(y+z)
((t-2)x + (t^{2}-t-1)y + t(t-2)z)\\
&\times ((t-2)(x+y) + t(t-2)z)
(t(t-2)x + (t^{2}-t-1)y + t(t-2)z)\\
&\times ((t-2)x + (t-1)(t-2)z)
((t-2)(1-t)x - (t-1)^{2}y),
\end{aligned}
\]
subject to the condition $t \not\in\bigg\{0,1,2, (1\pm \sqrt{5})/2 \bigg\}$. 

This implies that the realization space of the matroid $T_{10}$ is isomorphic to the complex line with finitely many points removed, corresponding to degenerate configurations (i.e., arrangements with distinct intersection lattices). Moreover, this family admits at least two homologically distinguished realizations, namely 
\[
L_{10}^1: Q_{3}=0, \qquad
L_{10}^2: Q_{\sqrt5+3}=0.
\]
The homological behavior of these arrangements is captured by the minimal graded free resolution of the module of Jacobian relations $D_0(Q_{t})$. One computes:
\[
0 \to S(-8) \to S(-6)^{\oplus 2} \oplus S(-5) \to D_0(Q_{3})
\]
for $L_{10}^1$, and
\[
0 \to S(-8)\oplus S(-7) \to S(-7)\oplus S(-6)^{\oplus 2}\oplus S(-5) \to D_0(Q_{\sqrt5+3})
\]
for $L_{10}^2$. In particular, $L_{10}^1$ is a curve of type $2A$, whereas $L_{10}^2$ is of type $2B$; see \cite{ADP} for a detailed discussion of curves of type $2$. Since they have identical values of ${\rm mdr}$, both arrangements satisfy the ${\rm (MDR)}$ condition. 
The pair $(L_{10}^1, L_{10}^{2})$ satisfies ${\rm (HF)}$ condition, namely we have
\begin{multline*}
HF(Q_{3}) = HF\bigg(Q_{\sqrt5+3} \bigg) = 1 + 3t + 6t^2 +10t^3 + 15t^4 + 21t^5 + 28t^6 +36t^7 \\ + 45t^8 + 52t^9 + 57t^{10} + 60t^{11} + 61t^{12} +60t^{13} + 58t^{14} + 57t^{15} + \ldots    
\end{multline*} 

We conclude our analysis of the case $n=10$ by discussing the special value $t=3$. 
First, we observe that for this parameter the arrangement may exhibit the generic type of the resolution, namely that the generic element in the moduli space is of type $2A$, as we obtain the same results for many values of $t$.

However, at $t=3$ the arrangement acquires an additional geometric feature, reminiscent 
of the phenomenon observed in the first Ziegler pair of line arrangements. 
More precisely, one can compute the coordinates of the twelve triple points, which are
\[
\begin{aligned}
&[0:0:1],\quad [0:1:0],\quad [0:1:-1],\quad [0:3:-5],\\
&[1:0:0],\quad [1:0:-1],\quad [3:0:-1],\quad [1:-1:0],\\
&[2:-1:-1],\quad [2:-1:1],\quad [2:1:-1],\quad [6:-3:-1].
\end{aligned}
\]
It turns out that there exists a unique irreducible conic
\[
C:\quad x^2 + xy + xz + 4yz = 0
\]
passing through the following six triple points:
\[
[0:0:1],\quad
[0:1:0],\quad
[1:0:-1],\quad
[1:-1:0],\quad
[2:-1:1],\quad
[2:1:-1].
\]
It is clear that the fact an irreducible conic passes through six points is a rather surprising geometric property.

Concluding this part, let us point out that similar properties appear in the case $n = 12$. A feature shared by these two examples appears to be that their moduli space is rational, that is, it can be identified with the complex line with a finite number of points removed. It should be noted, however, that this does not hold in general for elliptic matroids: for example, when $n = 11$ or $n = 13$, the corresponding moduli spaces no longer appear to be rational curves.
\section*{Acknowledgments}
We would like to thank Lukas K\"uhne for his help with symbolic computations.


\begin{thebibliography}{00}
\bibitem{Abe}  
T. Abe, Plus-one generated and next to free arrangements of hyperplanes. \textit{Int. Math. Res. Not.} \textbf{2021(12)}: 9233 -- 9261 (2021).

\bibitem{ADP}  T. Abe, A. Dimca, P. Pokora, A new hierarchy for complex plane curves. \textit{Canadian Mathematical Bulletin}: 1–24 (2025). \texttt{DOI:10.4153/S0008439525101422}.

\bibitem{BR}
L. Borisov and X. Roulleau, Modular curves $X_{1}(n)$ as moduli spaces of point arrangements and applications. \textbf{arXiv:2404.04364} (2024).

\bibitem{Burr}
S. A. Burr, B. Gr\"unbaum,  N. J. A. Sloane, The orchard problem. \textit{Geom. Dedicata} \textbf{2}: 397 -- 424 (1974).

\bibitem{OSCAR}
D. Corey, L.  K\"uhne, B. Schröter, \textit{Matroids}. In: Decker, W., Eder, C., Fieker, C., Horn, M., Joswig, M. (eds) The Computer Algebra System OSCAR. Algorithms and Computation in Mathematics, vol 32. Springer, Cham  (2025).

\bibitem{CP}
M. Cuntz, P. Pokora, Singular plane curves: freeness and combinatorics. \textit{Innov. Incidence Geom.} \textbf{22(1)}: 47 -- 65 (2025).

\bibitem{DmaxS} 
A. Dimca, Freeness versus maximal degree of the singular subscheme for surfaces in $\mathbb{P}^3$. \textit{Geom. Dedicata} \textbf{183}: 101 -- 112 (2016).

\bibitem{DHA}  
A. Dimca,   {\em Hyperplane Arrangements: An Introduction}. Universitext, Springer, 2017

\bibitem{FS} 
A. Dimca, G. Sticlaru, Free and nearly free surfaces in $\mathbb{P}^3$. \textit{Asian J. Math.} \textbf{22}: 787 -- 810 (2018).

\bibitem{3syz} A. Dimca, G. Sticlaru, Plane curves with three syzygies, minimal Tjurina curves curves, and nearly cuspidal curves. \textit{Geom. Dedicata} \textbf{207}: 29 -- 49 (2020).

\bibitem{ZP} A. Dimca, G. Sticlaru, From Pascal's Theorem to the geometry of Ziegler's line arrangements. \textit{J. Algebr. Comb.} \textbf{60}: 991 -- 1009 (2024). 

\bibitem{HD}
A.~Dimca,  G.~Sticlaru,
Bourbaki modules and the module of Jacobian derivations of projective hypersurfaces, arXiv:2506.23950, to appear in  Collect. Math.

\bibitem{Betti2}
A.~Dimca, G.~Sticlaru, Graded Betti numbers of the Jacobian algebra and total Tjurina numbers of plane curves. \textbf{arXiv:2601.08583} (2026).

\bibitem{Betti3}
A.~Dimca, G.~Sticlaru,  Graded Betti numbers of the Jacobian algebra of surfaces in $\mathbb{P}^3$. \textbf{arXiv:2602.09966} (2026).

\bibitem{Eis} { D. Eisenbud}, \emph{The Geometry of Syzygies: A Second Course in Algebraic Geometry and Commutative Algebra}, Graduate Texts in Mathematics, Vol. 229, Springer 2005. 

\bibitem{MNS}  J. Migliore, U. Nagel, H. Schenck, Schemes Supported on the Singular Locus of a Hyperplane Arrangement in $\PP^n$. \textit{Int. Math. Res. Not.} \textbf{2022(1)}: 18941 -- 18971 (2022).

\bibitem{OEIS}
OEIS Foundation Inc. (2019), The On-Line Encyclopedia of Integer Sequences. \url{http://oeis.org}.

 \bibitem{OT}
P.\ Orlik, H.\ Terao, {\em Arrangements of Hyperplanes}. Springer-Verlag, Berlin Heidelberg New York, 1992.

\bibitem{R} 
R.\ Randell,  Milnor fibrations of lattice-isotopic arrangements. \textit{Proc.\ Amer.\ Math.\ Soc.} \textbf{125}: 3003 -- 3009 (1997).

\end{thebibliography}
\end{document}